\newtheorem{thm}{Theorem}
\newtheorem{cor}{Corollary}
\newcommand{\DOIFPDF}[2]{\ifx\pdfoutput\undefined #2\else#1\fi}
\newcommand{\EM}{\ensuremath}
\newcommand{\PDFABLE}[2]{%
\newif\ifpdf%
\ifx\pdfoutput\undefined\pdffalse%
\else\pdftrue\pdfoutput=1\pdfcompresslevel=9\fi%
\ifpdf%
 \usepackage#1
 \usepackage[pdftex,%
             a4paper,%
             colorlinks,%
             citecolor=blue,%
             pagebackref,%
             plainpages=false]{hyperref}%
\else%
 \usepackage#2
 \usepackage{url}
\fi}
\newcommand{\@THMSTYLES}{%
  \newtheoremstyle{bodyrm}
  {4pt}
  {4pt}
  {}
  {}
  {\bfseries\sffamily}
  {.}
  { }
  {}
  \newtheoremstyle{bodyit}
  {4pt}
  {4pt}
  {\itshape}
  {}
  {\bfseries\sffamily}
  {.}
  { }
  {}
}
\newcommand{\THMEN}{%
  \@THMSTYLES
  \theoremstyle{bodyit}
  \newtheorem{thm}{Theorem}[section]%
  \newtheorem{cor}[thm]{Corollary}%
  \newtheorem{prop}[thm]{Proposition}%
  \newtheorem{lem}[thm]{Lemma}%
  \theoremstyle{bodyrm}%
  \newtheorem{defi}[thm]{Definition}%
  \newtheorem{xpl}[thm]{Example}%
  \newtheorem{exo}[thm]{Exercise}%
  \newtheorem{hyp}[thm]{Hypothesis}%
  \newtheorem{eur}[thm]{Heuristics}%
  \newtheorem{pro}[thm]{Problem}%
  \newtheorem{rem}[thm]{Remark}%
  \newtheorem{prp}[thm]{Property}%
}
\newcommand{\THMFR}{%
  \@THMSTYLES
  \theoremstyle{bodyit}
  \newtheorem{thm}{Théorème}[section]%
  \newtheorem{cor}[thm]{Corollaire}%
  \theoremstyle{bodyrm}%
  %
  %
  %
  %
  %
  %
  %
  %
}
\newcommand{\SMALLSECS}{%
 \renewcommand{\section}{\@startsection%
  {section}
  {1}
  {0em}
  {\baselineskip}
  {0.5\baselineskip}
  {\normalfont\large\bfseries}}
 \renewcommand{\subsection}{\@startsection%
  {subsection}
  {2}
  {0em}
  {\baselineskip}
  {0.25\baselineskip}
  {\normalfont\bfseries}}
}
\providecommand{\timenow}{\@tempcnta\time
\@tempcntb\@tempcnta
\divide\@tempcntb60
\ifnum10>\@tempcntb0\fi\number\@tempcntb
\multiply\@tempcntb60
\advance\@tempcnta-\@tempcntb
:\ifnum10>\@tempcnta0\fi\number\@tempcnta}
\newcommand{\versiondetravail}{%
 \renewcommand{\@evenfoot}{%
 \hfil{\tiny\texttt{%
   Version préliminaire, compilée le \today{} à \timenow.}\hfill}}%
 \renewcommand{\@oddfoot}{\@evenfoot}%
}
\newcommand{\cJ}{\EM{\mathcal{J}}}
\newcommand{\bP}{\EM{\mathbf{P}}}
\newcommand{\bR}{\EM{\mathbf{R}}}
\newcommand{\bX}{\EM{\mathbf{X}}}
\newcommand{\bY}{\EM{\mathbf{Y}}}
\newcommand{\la}{\lambda}
\newcommand{\f}{\frac}
\newcommand{\Det}[1]{\mathrm{Det}\,}
\renewcommand{\leq}{\leqslant}
\renewcommand{\geq}{\geqslant}
\begin{document}
%
%
\vskip 3mm
$l^1$ PENALTY FOR ILL-POSED INVERSE PROBLEMS
\noindent

\vskip 3mm

\vskip 5mm
\noindent Jean-Michel Loubes

\noindent Institut de
  Math\'ematiques, Equipe de statistique et de probabilit\'es UMR C5219 CNRS

\noindent Universit\'e Toulouse 3

\noindent  31062, Toulouse, \textsc{Cedex} 9, France

\noindent Loubes@cict.fr

\vskip 3mm
\noindent Key Words:  Asymptotic Statistics;  Inverse Problems; Penalized M-estimation; Sparsity.
\vskip 3mm

\noindent ABSTRACT

We tackle the problem of recovering an unknown signal observed in an ill-posed inverse problem framework. More precisely, we study a procedure commonly used in numerical analysis or image deblurring: minimizing an empirical loss function balanced by an $l^1$ penalty, acting as a sparsity constraint. We prove that, by choosing a proper loss function, this estimation technique enables to build an adaptive estimator, in the sense that it converges at the optimal rate of convergence without prior knowledge of the regularity of the true solution.
\vskip 4mm

\noindent    INTRODUCTION AND NOTATION

In this article we are interested in recovering an unobservable
signal $x_0$ based on observations  \begin{equation} \label{eq:model}
y(t_i)=F(x_0)(t_i)+\varepsilon_i, 
\end{equation}
where $F:X\to Y$ is a
linear functional, with $X,\: Y$ Hilbert spaces and
$t_i,i=1,\ldots,n$ is a fixed observation scheme. $x_0:  \bR \to
\bR$ is the unknown function to be recovered from the data
$y(t_i),\: i=1,\dots,n$. The regularity condition over the
unknown parameter of interest is expressed through the assumption
$x_0 \in X$ and will be made precise later in Section 3. We assume that the observations $y(t_i) \in \bR$
and that the observation noise $\varepsilon_i $ are i.i.d.
realizations of a certain random variable $\varepsilon$.
Throughout   the paper, we shall denote ${\bf
y}=(y(t_i))_{i=1}^n$. We assume $F$ is Fr{\'e}chet differentiable
and ill-posed in the sense that  our noise corrupted observations
might lead to large deviations when trying to estimate $x_0$.\vskip .1in In
a deterministic framework, the statistical model \eqref{eq:model}
is formulated  as the problem of approximating the  solution of
$$F(x)=y,$$  when $y$ is not known, and is only available through  an approximation $y^\delta$, $$\|y-y^\delta\|\leq \delta.$$
It is important to remark that whereas in this case consistency of the estimators depends on the approximation parameter $\delta$, in (\ref{eq:model}) it depends on the number of observations $n$.\\
\indent The best $L^2$ approximation of $x_0$ is
$x^\dag=F^{\dag}y$, where $F^{\dag}$ is the Moore-Penrose (generalized)
inverse of $F$. We will say the problem is ill-posed if   $F^\dag$
is unbounded. This entails that $F^\dag(y^\delta )$ is not close to $x^\dag$.
 Hence, the inverse operator needs to be, in some sense, regularized.  Regularization methods replace an
ill-posed problem by a family of well-posed problems. Their
solution, called regularized solutions, are used as approximations
of the desired solution of the inverse problem. These methods
always involve some parameter measuring the closeness of the
regularized and the original (unregularized) inverse problem.
Rules (and algorithms) for the choice of these regularization
parameters as well as convergence properties of the regularized
solutions are central points in the theory of these methods, since
they allow to find the right balance between stability and
accuracy. \vskip .1in
For this we consider
penalized M-estimators minimizing quantities of the form
\begin{equation} \label{nou} \hat{x}_n= {\rm arg} \min_{x \in {\mathcal{X}}} \left( \gamma_n(y-F(x)(t))+
\alpha_n {\rm pen}(x,{\mathcal{X}}) \right),
\end{equation}
where ${\mathcal {X}}$ is a specific set, $\gamma_n(.)$ is an empirical
loss-function, ${\rm pen}(.,.)$ is a penalty over $x$ in
${\mathcal{X}}$, and $\alpha_n\in \Theta$ is a decreasing sequence
all of which   will be defined precisely later. The idea of
penalized M-estimators is to find
 an estimator close enough to the data, close in the sense defined by $\gamma_n$ and with a regularity
 property induced by the choice of the penalty ${\rm pen}$. The smoothing sequence $\alpha_n$ balances the two terms.
  The greater $\alpha_n$, the smoother  the estimator will be,
  while the smaller $\alpha_n$ the closer the estimator will be to the
  data, maybe leading to a too rough estimate. Moreover the penalty should be chosen without prior knowledge of the regularity of the function to be estimated, $x_0$, in order to give rise to adaptive estimator. Adaptivity here has to be understood in the sense that the estimator converges at the optimal rate  without knowing a priori its regularity. It is a theoretical alternative to cross validation methods developed for instance in Dey et al. (1996). We point out that in numerical analysis, adaptivity is not a key issue as in statistics. Indeed the smoothing sequence is often selected using such posterior techniques, see Tautenhahn and Jin (2003) or Kaltenbacher (2000). In a deterministic setting, the rates are not altered. This not the case when observing the data in a white noise framework, which justifies the need for fully adaptive methods.\vskip .1in
Different choices of penalty have been investigated in the literature.  The traditional choice of a quadratic penalty defines the Tikhonov regularized estimator whose behaviour is well studied but which does not lead to adaptive estimation. For general references about this estimator, we refer to Tikhonov et al. (1998), Bissantz et al. (2004), or Engl (2000). A penalty on the number of non zero coefficients leads to hard-thresholded or projection estimators, whose asymptotic behaviour is studied in Kaltenbacher (2000), Mair and Ruymgaart (1996) or Engl et al. (1996). Adaptive estimators can yet be built with both methodologies, but by considering model selection techniques, as done in Loubes and Lude\~na (2005). But adaptivity means, in that case, that the estimator behaves as well as the best estimate obtained over a fixed class of estimators, i.e a collection of models. This property is generally expressed through an oracle inequality. Hence, there might be a bias if the true solution is not well approximated by the sieves. \\
\indent In this article, we tackle the problem of the asymptotic behaviour of the estimator obtained with a $l^1$ penalty. Indeed, over the last decade, $l^1$ penalty has been more and more used in a large variety of fields. Indeed such a penalty selects sparse signals in a smoother way than hard-thresholding penalty and can also be easily implemented. Contrary to differentiable penalties for which adaptivity implies selecting the smoothing sequence among a set of possible choices, there is an optimal choice of the trade-off parameter when using a soft-thresholding penalty. And this optimal choice does not depend on the unknown regularity of the parameter of interest, which enables adaptive estimation. Hence $l^1$ norm penalty is used in estimation with soft-thresholding estimators in Loubes and van de Geer (2002) or Loubes (2007), and in inverse problems in Daubechies et al. (2004) or Cohen et al. (2003). We point out that in Cohen et al. (2003), the estimator is the soft thresholded version of the estimation which is used, and the properties of the estimator are studied using a sequential version of model \eqref{eq:model} together with a projection method.\vskip .1in
 In this article, we construct a penalized estimator with a $l^1$ penalty, with an appropriate loss function depending on the operator. We show that such an estimator converges and is adaptive over a class of Besov spaces for a certain class of ill-posed problems. When trying to minimize a standard empirical quadratic loss function together with a softhresholding penalty, as often done in numerical analysis literature, the results are different. The estimator may be inconsistent and its rate of convergence is rather slow. We also provide the rate of convergence in this case and stress the advantages of choosing a loss function depending on the operator to build a more efficient estimator.\vskip .1in
The article falls into the following parts. Section 1 presents the model and the overall assumptions. The estimation procedure and its general efficiency are described in Section 2. Rates of convergence and adaptivity under smoothness assumptions are also discussed in Section 3. The section 4 is devoted to the analysis of the performances of the usual penalized least squares estimator for inverse problems. Simulations are conducted in Section 5.

\vskip 3mm

\noindent 1. INVERSE PROBLEM MODEL 

Consider the following inverse model:
\begin{equation}
  \label{eq:model2}
  y_i=y(t_i)=F(x_0)(t_i)+ \epsilon_i, \: i=1,\dots,n.
\end{equation}
 where $F: \bX \rightarrow \bY$ is a known linear operator, whose adjoint will be denoted $F^*$. 
 Set $\delta$ the Dirac function and define $Q_n$ the empirical measure of the
covariables as $Q_n = \frac{1}{n} \sum_{i=1}^n \delta_{t_i} .$ Throughout all the paper, the estimation errors will be given with respect to the $L_2 ( Q_n )$-norm  defined, for all  functions $y \in \bY$, by
$$\| y \|^2_{n}=  \int y^2 d Q_n  = \frac{1}{n} \sum_{i=1}^n y^2(t_i).$$
The corresponding empirical scalar product is given by $<y,\epsilon >_n = \frac{1}{n} \sum_{i=1}^n \epsilon_i y(t_i).$ \vskip .1in
As often $F$ is not of full rank, so the singular value
decomposition (SVD) of the operator $S$ is then a useful tool.\\
Let $(\lambda_j; \varphi_j, \psi_j)_{j \geq 1}$ be a singular system
for the linear operator $F$, that is, $F \psi_j = \lambda_j
\varphi_j$ and $F^* \varphi_j = \lambda_j \psi_j$; where
$\{\lambda_j^2\}_{j \geq 1}$ are the non zero eigenvalues of the
selfadjoint operator $F^* F$,
considered in decreasing order. Furthermore, $\{\psi_j\}_{j =1,\dots,n
}$ and $\{\varphi_j\}_{j =1,\dots,n}$ are a corresponding complete
orthonormal system with respect to $\|.\|_n$ of eigenvectors of $F^*F$ and $F F^*$,
respectively. For general linear operators with an SVD
decomposition, we can write for all $(x,y) \in \bX \times \bY$

\begin{equation} F x = \sum_{j=1}^n \lambda_{j} \langle x , \psi_j\rangle
\varphi_j  
\end{equation}

\begin{equation} F^* y = \sum_{j=1}^n \lambda_{j} \langle y ,
\varphi_j\rangle \psi_j.
\end{equation}
\vskip .1in

For $y$  in the domain of  $F^{\dag}$,
$\mathcal{D}(F^{\dag})$, the best-approximate $L^2$ solution
has the expression
$$F^{\dag}y = \sum_{j=1}^n \frac{\langle y ,\varphi_j\rangle
}{\lambda_j} \psi_j = \sum_{j=1}^n \frac{\langle F^*y
,\psi_j\rangle}{\lambda_j^2} \psi_j.$$

Note that for large $j$, the term $1/\lambda_j$ grows to infinity.
Thus, the \emph{high frequency errors} are strongly amplified. This amplification measures the difficulty of the inverse problem, the faster the decay of the eigenvalues, the more difficult is the inverse problem. In this paper we will tackle the problem of polynomial decay of eigenvalues, which can be described by the following assumption  
\begin{description}
\item[Index of ill-posedness] 
Assume that there exists an index $t>0$, called  the index of ill-posedness of the
operator $F$, following notations in Engl et al. (1996), such that
 $$\lambda_j = \mathcal{O}(j^{-t}).$$ 
\end{description}
This difference with standard regression model for which $t=0$ yields other optimal rates  which are usual in statistics. In Section 3, we will compare the rates obtained by our estimator to these optimal rates of convergence. 
\vskip .1in
A penalized M-estimator is defined using a distance $d$, between the observations $y=(y(t_1),\dots,y(t_n)) \in Y$ and a function $x \in X$, and a penalty. Hence we shall study an estimator of the following type 
\begin{equation}
  \label{eq:1}
  \hat{x}_n= {\rm arg}\min_{x \in X} \left[ {\frac 1 n} \sum_{i=1}^n d(y(t_i),x(t_i)) + I_\mu( x) \right],
\end{equation}
where $\mu$ is a smoothing sequence and $I_\mu(.)$ is taken to be the soft-thresholding penalty. Indeed for $\mu=(\mu_j),\: j=1,\dots,n$ and $\forall x=\sum_{j=1}^n x_j \psi_j \in X,$ set $ \: I_\mu(x):=\sum_{j=1}^n |\mu_j x_j|$  the $l^1$ weighted norm of the function $x$. In the direct case where $F={\rm Id}$, and for a quadratic loss function, i.e $${\frac 1 n} \sum_{i=1}^n d(y(t_i),x(t_i)) =  {\frac 1 n} \sum_{i=1}^n |y(t_i)-x(t_i)|^2,$$ the estimator can be computed explicitly and is called the soft-thresholded estimator, as pointed out in Loubes and van de Geer (2002). \vskip .in In the inverse problem literature, such an estimator is often used in image recognition or in geophysics. Indeed the $l^1$ norm is well fitted to handle such signals. In such cases, it is not possible to solve the corresponding minimization issue and the asymptotic properties of the estimator are not known. Hence, in the following section, we propose an adequate choice of loss function which enables to build a sparse estimator converging at an optimal rate of convergence. Moreover this estimator is adaptive, thanks to the sparsity property of the $l^1$ norm. We point out that adaptivity means that the regularity of the function $x_0$ is unknown but that the estimator still achieves the optimal rate of convergence. Nevertheless, the operator $F$ is assumed to be known, as well as the degree of ill-posedness. This assumption is common in the statistical literature on inverse problems.

\vskip 3mm

\noindent 2. ESTIMATION USING SOFT THRESHOLDING PENALTY

In this section, we investigate the classical inverse regression model \eqref{eq:model} with  independent errors $\epsilon_1,\dots,\epsilon_n$ with zero expectation and finite variance $\sigma^2$. Assume moreover that the following condition over the observations errors holds
\begin{description}
\item[Error bound ($\Delta$):] 
Suppose that for some constant $K < \infty$, 
the errors
$\epsilon_1 , \ldots , \epsilon_n$ satisfy
$$\max_{i=1 , \ldots , n } {\bf E} \exp [ \epsilon_i^2 / K^2 ] \leq K .$$
\end{description}
 For a choice of penalty $I_\mu(x)=\sum_{j=1}^n \mu_j |x_j|$, let $\hat{x}_n$ be the $l^1$ penalized estimator defined as
\begin{align}
  \label{eq:2}
  \hat{x}_n & ={\rm arg}\min_{x=\sum_{j=1}^n x_j \psi_j \in X} \left[ \sum_{j=1}^n \left| <y-F(x), \frac{\varphi_j}{\lambda_j}>_n \right|^2 +  I_\mu( x) \right]\\
 & = \sum_{j=1}^n \hat{x}_{j,n} \psi_j \nonumber
\end{align}
 with $\mu=(\mu_j,\: j=1,\dots,n)$ a sequence of real numbers.\\
\indent This penalized estimator mimics the soft-thresholded estimator, as pointed out in Loubes and van de Geer (2002). Contrary to the direct case where the thresholding level can be chosen equal to a constant $\mu=\mu_n$, in this case we consider a threshold that changes at each reconstruction level $j$, $\mu=(\mu_j)$ and which depends on the nature of the inverse problem. Indeed, selecting the coefficients without considering the effect of the inverse problem is too rough and the usual choice in nonparametric estimation $\mu=2c \sqrt{ \frac  {\log n} n}$ may lead to inconsistent estimator. Hence, in this this work, define the smoothing sequence as $\forall j=1,\dots,n\:$ $\mu_j:=2\frac{c}{\la_j} \sqrt{ \frac {\log n} n},$ for $c$ a given constant.\\
\indent Write $x_0=\sum_{j=1}^n x_{j,0} \psi_j$ and consider  $x_{*}=\sum_{j=1}^n x_{j,*} \psi_j$, the hard-threshold version of the true function $x_0$, defined as
$$ x_{j,*}= \begin{cases} x_{j,0}, \:  & {\rm if}\: |x_{j,0}| > \mu_j \\
0, \: &  {\rm if} \: |x_{j,0}| \leq \mu_j 
\end{cases}, \: j=1,\dots,n.
$$ We will now establish an upper bound for $\| \hat{x}_n-x_0\|_n$ which depends on the performance of the oracle $\|x_*-x_0\|_n$. This will enable us to get rates of convergence for ill-posed inverse problems and to prove adaptivity of the estimation procedure.\\
\indent In the theorem we write
$$ V_j=\frac{1}{n} \sum_{i=1}^n \varphi_j(t_i) \epsilon_i,\: j=1,\dots,n.$$
\begin{thm} 
\label{first}
  Let $B_n$ be the set $$B_n= \{ \max_{j=1,\dots,n} |V_j| \leq c \sqrt{\frac{\log n}{n}} \}.$$ Consider the set of indexes $\mathcal{J}_n= {\rm Card} \{j, \: |x_{j,0}|> \mu_j \}.$ Then on $B_n$ we have
  \begin{equation}
    \label{tradeoff}
    \| \hat{x}_n-x_0\|_n^2 \leq \| x_{*}-x_0\|_n^2 + 4 c \sqrt{\frac{\log n}{n}} \sum_{j \in \cJ_n} \frac{|\hat{x}_j-x_{j,0}|}{\lambda_j},
  \end{equation}
\end{thm}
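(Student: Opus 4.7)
The plan is to run a standard basic-inequality argument, exploiting that the modified empirical loss in \eqref{eq:2} is diagonal in the singular basis so that everything decouples coordinatewise.

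First I would rewrite the data-fidelity term. Since $F\psi_k=\lambda_k\varphi_k$ and $\{\psi_j\}$, $\{\varphi_j\}$ are orthonormal with respect to $\|\cdot\|_n$, for $x=\sum_j x_j\psi_j$ we get $\langle F(x),\varphi_j/\lambda_j\rangle_n=x_j$, so using $y=F(x_0)+\epsilon$ one has
$$\sum_{j=1}^n \left|\langle y-F(x),\varphi_j/\lambda_j\rangle_n\right|^2=\sum_{j=1}^n\left(x_{j,0}-x_j+V_j/\lambda_j\right)^2.$$
Expanding this and using Parseval in the $\|\cdot\|_n$-orthonormal basis $\{\psi_j\}$ gives $\sum_j(x_j-x_{j,0})^2=\|x-x_0\|_n^2$ plus cross and noise terms. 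Writing this expansion both for $x=\hat x_n$ and for $x=x_*$, the optimality inequality $\sum_j|\langle y-F(\hat x_n),\varphi_j/\lambda_j\rangle_n|^2+I_\mu(\hat x_n)\le \sum_j|\langle y-F(x_*),\varphi_j/\lambda_j\rangle_n|^2+I_\mu(x_*)$ collapses (the $\sum V_j^2/\lambda_j^2$ terms cancel) to the basic inequality
$$\|\hat x_n-x_0\|_n^2\;\le\;\|x_*-x_0\|_n^2\;+\;2\sum_{j=1}^n(\hat x_{j,n}-x_{j,*})\frac{V_j}{\lambda_j}\;+\;I_\mu(x_*)-I_\mu(\hat x_n).$$

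Next I would control the stochastic term on the event $B_n$. By definition $|V_j|\le c\sqrt{\log n/n}=\lambda_j\mu_j/2$, so
$$\Big|2\sum_{j=1}^n(\hat x_{j,n}-x_{j,*})V_j/\lambda_j\Big|\;\le\;\sum_{j=1}^n\mu_j\,|\hat x_{j,n}-x_{j,*}|.$$
This is the point of calibrating $\mu_j$ with $1/\lambda_j$: the deterministic threshold $\mu_j$ dominates the per-frequency noise amplification $V_j/\lambda_j$ that is produced by the modified loss.

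The last step is the penalty manipulation, and this is the only place where care is needed. Since $x_{j,*}=x_{j,0}\mathbf{1}\{j\in\mathcal J_n\}$, the terms outside $\mathcal J_n$ in $\sum_j\mu_j|\hat x_{j,n}-x_{j,*}|$ are exactly $\sum_{j\notin\mathcal J_n}\mu_j|\hat x_{j,n}|$, which cancel against the corresponding terms in $I_\mu(\hat x_n)$. Hence
$$\sum_{j=1}^n\mu_j|\hat x_{j,n}-x_{j,*}|+I_\mu(x_*)-I_\mu(\hat x_n)=\sum_{j\in\mathcal J_n}\mu_j\bigl(|\hat x_{j,n}-x_{j,0}|+|x_{j,0}|-|\hat x_{j,n}|\bigr),$$
and the reverse triangle inequality $|x_{j,0}|-|\hat x_{j,n}|\le |\hat x_{j,n}-x_{j,0}|$ bounds this by $2\sum_{j\in\mathcal J_n}\mu_j|\hat x_{j,n}-x_{j,0}|$. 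Substituting $\mu_j=2(c/\lambda_j)\sqrt{\log n/n}$ produces the factor $4c\sqrt{\log n/n}\sum_{j\in\mathcal J_n}|\hat x_{j,n}-x_{j,0}|/\lambda_j$ and yields \eqref{tradeoff}. The main obstacle is really just the bookkeeping in this last step, in particular noticing the cancellation off $\mathcal J_n$ that turns the apparent $\ell^1$-distance to $x_*$ plus the $I_\mu$ difference into a clean sum supported on $\mathcal J_n$.
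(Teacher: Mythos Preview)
Your argument is correct and follows the same route as the paper: derive the basic inequality from optimality of $\hat x_n$ against $x_*$, bound the stochastic cross term on $B_n$ via $2|V_j|/\lambda_j\le\mu_j$, split the penalty over $\mathcal J_n$ and its complement, use $x_{j,*}=x_{j,0}\mathbf 1\{j\in\mathcal J_n\}$ to get cancellation off $\mathcal J_n$, and apply the reverse triangle inequality on $\mathcal J_n$. Your bookkeeping is in fact slightly tidier than the paper's, since you combine the noise bound and the $I_\mu$ difference into a single sum before splitting.
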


\begin{proof}
  First note that the empirical contrast can be rewritten in a different way for all $x\in X$
\begin{align*}
  \sum_{j=1}^n \left| \frac{1}{\lambda_j} <y-F(x),\varphi_j>_n \right|^2 & = \sum_{j=1}^n \left| \frac{1}{\lambda_j} [ <y-F(x_0),\varphi_j>_n+<F(x_0)-F(x),\varphi_j>_n] \right|^2 \\
 = \sum_{j=1}^n \left| \frac{1}{\lambda_j} <\epsilon,\varphi_j>_n \right|^2 & + \sum_{j=1}^n |x_{j,0}-x_j|^2 + 2 \sum_{j=1}^n \frac{|x_{j,0}-x_j|}{\lambda_j}<\epsilon,\varphi_j>_n.
\end{align*}
 Set $ V_j=<\epsilon,\varphi_j>_n=\frac{1}{n} \sum_{i=1}^n \epsilon_i \varphi_j(t_i).$ 
Using Definition \eqref{eq:2} and previous remark,  it implies that
\begin{align*}
\sum_{j=1}^n \left| \frac{1}{\lambda_j} <y-F(\hat{x}_n),\varphi_j>_n \right|^2 +   I_\mu (\hat{x}_n)  & \leq \sum_{j=1}^n \left| \frac{1}{\lambda_j} <y-F(x_*),\varphi_j>_n \right|^2 +  I_\mu (x_*) \\
\| \hat{x}_n - x_0\|_n^2 +  I_\mu (\hat{x}_n) & \leq \|x_*-x_0\|_n^2+2 \sum_{j=1}^n V_j \frac{|\hat{x}_j-x_{j,*}|}{\lambda_j} + I( \mu x_*) \\
& \leq  \|x_*-x_0\|_n^2+ 2 \left(\max_{j=1,\dots,n}|V_j|\right) \sum_{j=1}^n \frac{|\hat{x}_j-x_{j,*}|}{\lambda_j} + I_\mu( x_*)
\end{align*}
Now use the following decomposition $$ I_\mu(x)= \sum_{j \in \cJ_n} \mu_j|x_j|+ \sum_{j \notin \cJ_n} \mu_j|x_j|,$$ to get that  on the set $B_n$
\begin{align*}
  \| \hat{x}_n - x_0\|_n^2 + I_\mu( \hat{x}_n ) & \leq  \|x_*-x_0\|_n^2+ 2 c \sqrt{\frac{\log n}{n}} \sum_{j=1}^n \frac{|\hat{x}_j-x_{j,*}|}{\lambda_j} + I_\mu (x_*) \\
\| \hat{x}_n - x_0\|_n^2 + \sum_{j \notin \cJ_n} \mu_j |\hat{x}_j|  & \leq  \|x_*-x_0\|_n^2 + 2 c \sqrt{\frac{\log n}{n}} \sum_{j \in \cJ_n} \frac{|\hat{x}_j-x_{j,*}|}{\lambda_j} +  2 c \sqrt{\frac{\log n}{n}} \sum_{j \notin \cJ_n} \frac{|\hat{x}_j|+|x_{j,*}|}{\lambda_j} \\
 + &  \sum_{j \in \cJ_n} \mu_j (|x_{j,*}|-|\hat{x}_j|) +  \sum_{j \notin \cJ_n} \mu_j |x_{j,*}|.
\end{align*}
Point out that 
$$
  x_{j,*}= \begin{cases} x_{j,0}, \: &{\rm if} \:  j \in \cJ_n \\
 0, \: & {\rm if}\:  j \notin \cJ_n 
\end{cases}
.
$$
Then, for a choice $\mu_j=2\frac{c}{\lambda_j}\sqrt{\log n /n}$ and using $||\hat{x}_j|-|x_{j,0}||\leq |\hat{x}_j-x_{j,0}|$, we obtain that:
\begin{align*}
\| \hat{x}_n - x_0\|_n^2 + \sum_{j \notin \cJ_n} \mu_j |\hat{x}_j| & \leq  \|x_*-x_0\|_n^2+4 c \sqrt{\frac{\log n}{n}} \sum_{j \in \cJ_n} \frac{|\hat{x}_j-x_{j,0}|}{\lambda_j}  +  \sum_{j \notin \cJ_n} \mu_j |\hat{x}_j| \\
\| \hat{x}_n - x_0\|_n^2 & \leq  \|x_*-x_0\|_n^2+4 c \sqrt{\frac{\log n}{n}} \sum_{j \in \cJ_n} \frac{|\hat{x}_j-x_{j,0}|}{\lambda_j},
\end{align*}
which proves the result.
\end{proof}


\begin{cor} \label{result} Under assumption $(\Delta)$, it follows from e.g.\ { van de Geer} (2000) , Lemma 8.2), that
for a constant $c$ depending on $K$,
\begin{equation} \label{geer} {\bf P} \left ( \max_{j=1 , \ldots , n} | V_j| > c \sqrt {\log n 
\over n } \right ) \leq c \exp [ - { \log n \over c^2 } ] . 
\end{equation}
Thus, we 
obtain for two positive finite constants $c_1$ and $c_2$
$${\bf P} \left( \| \hat{ x}_n - x_0 \|_n^2 >c_1
 \| x_* - x_0 \|_n^2 +  c_2 \frac{\log n}{n} \sum_{j \in \cJ_n}\frac{1}{\lambda_j^2} \right) \leq
c \exp [- { \log n \over c^2 } ] . $$ 
\end{cor}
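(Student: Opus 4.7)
The plan is to combine the deterministic bound from Theorem~\ref{first} with the concentration estimate for $\max_j |V_j|$ coming from van de Geer's lemma, and then to convert the linear cross term $\sum_{j\in\cJ_n}|\hat{x}_j-x_{j,0}|/\la_j$ appearing in \eqref{tradeoff} into something quadratic that can be absorbed on the left.

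First I would quote the subgaussian-type bound: under assumption $(\Delta)$, Lemma~8.2 of van de Geer (2000) gives $\mathbb{P}(B_n^c)\leq c\exp[-\log n / c^2]$ for $c$ depending on $K$. This is exactly \eqref{geer}, and it means that all the subsequent reasoning takes place on an event of overwhelming probability. The rest of the argument will therefore be deterministic, carried out on $B_n$.

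On $B_n$, Theorem~\ref{first} yields
$$\|\hat x_n-x_0\|_n^2\leq \|x_*-x_0\|_n^2+4c\sqrt{\tfrac{\log n}{n}}\sum_{j\in\cJ_n}\frac{|\hat x_j-x_{j,0}|}{\la_j}.$$
The key step is to control the last sum by \cs: since $\{\psi_j\}$ is orthonormal for $\|\cdot\|_n$, one has $\sum_{j\in\cJ_n}|\hat x_j-x_{j,0}|^2\leq \|\hat x_n-x_0\|_n^2$, hence
$$\sum_{j\in\cJ_n}\frac{|\hat x_j-x_{j,0}|}{\la_j}\leq \|\hat x_n-x_0\|_n\Biggl(\sum_{j\in\cJ_n}\frac{1}{\la_j^2}\Biggr)^{1/2}.$$
I would then apply the elementary inequality $2ab\leq \tfrac12 a^2+2b^2$ with $a=\|\hat x_n-x_0\|_n$ and $b=4c\sqrt{\log n/n}\,\bigl(\sum_{j\in\cJ_n}\la_j^{-2}\bigr)^{1/2}$ to dominate the cross term by $\tfrac12\|\hat x_n-x_0\|_n^2+C\tfrac{\log n}{n}\sum_{j\in\cJ_n}\la_j^{-2}$ for a suitable constant $C$.

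Absorbing the $\tfrac12\|\hat x_n-x_0\|_n^2$ into the left-hand side leaves, on $B_n$,
$$\|\hat x_n-x_0\|_n^2\leq 2\|x_*-x_0\|_n^2+2C\,\frac{\log n}{n}\sum_{j\in\cJ_n}\frac{1}{\la_j^2},$$
which is the deterministic bound with $c_1=2$ and $c_2=2C$. Taking complements and combining with \eqref{geer} gives the probabilistic statement of the corollary. No step is really a serious obstacle; the one point that requires a moment of care is making sure the \cs bound applies to $\|\hat x_n-x_0\|_n$ rather than to the truncated difference $\|\hat x_n-x_*\|_n$, which is fine because $\cJ_n$ is exactly the set of indices where $x_*$ and $x_0$ coincide while on $\cJ_n^c$ the summand is irrelevant to the cross term.
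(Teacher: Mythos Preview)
Your proof is correct and follows essentially the same route as the paper: apply \cs{} to the cross term in \eqref{tradeoff} to bound it by $\|\hat x_n-x_0\|_n\bigl(\sum_{j\in\cJ_n}\la_j^{-2}\bigr)^{1/2}$, then use a Young-type inequality $2ab\leq \gamma^{-1}a^2+\gamma b^2$ to absorb the quadratic piece into the left-hand side, and finally invoke \eqref{geer} to pass from the deterministic bound on $B_n$ to the probabilistic statement. The paper leaves the splitting parameter $\gamma\in(0,1)$ free while you fix it to $1/2$, but this is cosmetic; your careful remark that on $\cJ_n$ one has $x_{j,*}=x_{j,0}$, so the \cs{} bound legitimately produces $\|\hat x_n-x_0\|_n$, is a point the paper glosses over.
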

We point out that previous bound is as sharp as the equivalent one for direct estimation problems, see for instance Loubes and van de Geer (2002). The choice of the smoothing sequence $\mu$ does not depend on the regularity of the unknown function $x_0$. Hence we expect adaptivity under regularity conditions. It still depends on the distribution of the errors, since the constant $c$ in \eqref{geer} may be large. As a consequence, if the errors have heavy tails, the rate of convergence of the $l^1$ penalized estimator may be slow.\vskip .1in
Nevertheless, this estimator works with real observation and can handle a large variety of observation noise since Assumptions $(\Delta)$ are rather weak.
\begin{proof}
  First note that Cauchy-Schwartz inequality entails that, on the set $B_n$, we have
$$\| \hat{x}_n - x_0\|_n^2  \leq 4 c \sqrt{\frac{\log n}{n}} \| \hat{x}_n-x_0 \|_n \sqrt{\sum_{j \in \cJ_n} \frac{1}{\lambda_j^2}} +  \| x_* - x_0 \|_n^2.$$

  Since for all $1>\gamma>0$,  we get  $2xy\leq \frac{1}{\gamma}x^2 + \gamma y^2$, we obtain that
  \begin{equation*}
(1-\gamma)    \|\hat{x}_n-x_0\|_n^2 \leq  \| x_* - x_0 \|_n^2+ \frac{4c^2}{\gamma} \frac{\log n}{n} \sum_{j \in \cJ_n}\frac{1}{\lambda_j^2},
  \end{equation*}
leading to \eqref{result} for proper constant choices.
\end{proof}
\vskip 3mm

\noindent 3.  RATES OF CONVERGENCE UNDER SOURCE CONDITION

In this section, we illustrate the consequences of Corollary \ref{result} for functions $x_0$ belonging to some special smoothness sets. Consider the set of functions defined by two parameters, a smoothness parameter $s$ and a moment parameter $0<p<2$ as 
\begin{equation} \label{Besov2} X_{s,p}=\{x=\sum_{j=1}^n x_j\psi_j, \: \sum_{j=1}^n j^{p\left(s+\frac{1}{2}-\frac{1}{p}\right)}x_j^p \leq 1\}.
\end{equation}
Such sets are balls of Besov bodies associated to the Besov spaces $B_{pp}^s([0,1]).$ These spaces are intrinsically connected to the analysis of curves since the scale of Besov spaces yields the opportunity to describe the regularity of functions, with more accuracy than the classical H\"older scale. General references about Besov spaces are Besov et al. (1978). Consider a wavelet basis $(\psi_{jk})_{j,k}$ with regularity $r$ such that $r\geq s$. Then a Besov norm is equivalent to an appropriate norm in the sequence space, that is the space of the wavelet coefficients. If $x_{jk}$ are the wavelet coefficients of a function $x=\sum_{j,k} x_{jk} \psi_{jk}$, hence the ball with radius 1 of Besov space $B^s_{pp}([0,1])$ can be fully characterized by the Besov semi-norm
\begin{equation}
  \label{Besov1}
  \left( \sum_{j=1}^{+\infty} \left[2^{j(s+\frac{1}{2}-\frac{1}{p})} \left( \sum_{k=0}^{2^j-1} x_{jk}^p\right)^{\frac{1}{p}}\right]^p\right)^{\frac{1}{p}} \leq 1.
\end{equation}
as proved in H\"ardle et al. (1998). In the Besov space interpretation $X_{s,p}$ with $n=\infty$ corresponds (in the sense of norm equivalence) to a Besov ball in the space $B^s_{pp}([0,1])$.\vskip .1in
Here consider the special case where $p$ is such that $\f 1 p=\f 1 2+ \f s {2t+1}$. This choice corresponds to the set for which classical soft thresholded estimators for inverse problems are optimal, see for instance Theorem 3.2 in Cohen et al. (2003). For that choice of parameters, we have $j^{p(s+1/2-1/p)}=\lambda_j^{-2sp/(2t+1)}$. Hence previous set can then be rewritten as $$X_{s,p}= \{ x=\sum_{j=1}^n x_j \psi_j,\: \sum_{j=1}^n \frac{x_j^p}{\lambda_j^{\f {2sp}{2t+1}}} \leq 1 \}.$$
Such regularity condition can be interpretated as a source set condition, used in the literature of deterministic inverse problems, see for instance Engl et al. (1996), Darolles et al. (2003) or Fermin et al. (2005). Such spaces link the decay of the $x_j$'s, the coefficients of the unknown function in the SVD basis with the decay of the $\la_j$'s, the eigenvalues of the operator.
\vskip .1in
The following theorem gives the rate of convergence of the penalized $l^1$ estimator for inverse problems.
\begin{thm} \label{thrates}
  Assume that there are $s$ and $0<p<2$ such that $x_0 \in X_{s,p}$, with $\f 1 p=\f 1 2+ \f s {2t+1}$. Then we get the following estimation error
  \begin{equation}
    \label{eq:rate}
    \| \hat{x}_n-x_0 \|^2_n = O_{\bP} \left( \left(\f n  {\log n}\right)^{-\frac{2s}{2s+2t+1}} \right).
  \end{equation}
\end{thm}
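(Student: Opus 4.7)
The plan is to invoke Corollary \ref{result} and reduce the theorem to bounding the two deterministic quantities
$$\|x_*-x_0\|_n^2 \quad\text{and}\quad \frac{\log n}{n}\sum_{j\in\cJ_n}\frac{1}{\lambda_j^2}$$
uniformly in $x_0\in X_{s,p}$. Writing $\veps_n=\sqrt{\log n/n}$, the threshold is $\mu_j=(2c/\lambda_j)\veps_n$ and the Besov-body condition reads $\sum_j|x_{j,0}|^p\lambda_j^{-2sp/(2t+1)}\leq 1$. A short algebra check shows that the choice $\frac{1}{p}=\frac{1}{2}+\frac{s}{2t+1}$ is precisely the one that makes the exponents match: $p-2=-\frac{4s}{2s+2t+1}=-\frac{2sp}{2t+1}$ and $2-p=\frac{4s}{2s+2t+1}$. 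This algebraic identity is what ultimately drives the rate.

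First I would control the stochastic (variance) term. For $j\in\cJ_n$ one has $1<|x_{j,0}|^p\mu_j^{-p}$, so
$$\sum_{j\in\cJ_n}\frac{1}{\lambda_j^2}\leq\sum_{j\in\cJ_n}\frac{|x_{j,0}|^p}{\mu_j^p\,\lambda_j^2}=(2c)^{-p}\veps_n^{-p}\sum_{j\in\cJ_n}|x_{j,0}|^p\lambda_j^{p-2}\leq(2c)^{-p}\veps_n^{-p},$$
where the last step uses the Besov-body bound together with the exponent identity above. Multiplying by $\veps_n^2$ yields
$$\frac{\log n}{n}\sum_{j\in\cJ_n}\frac{1}{\lambda_j^2}\lesssim\veps_n^{2-p}=\left(\frac{\log n}{n}\right)^{\frac{2s}{2s+2t+1}}.$$

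Next I would treat the bias term. Since $|x_{j,0}|\leq\mu_j$ for $j\notin\cJ_n$ and $2-p>0$,
$$\|x_*-x_0\|_n^2=\sum_{j\notin\cJ_n}|x_{j,0}|^2\leq\sum_{j\notin\cJ_n}\mu_j^{2-p}|x_{j,0}|^p=(2c)^{2-p}\veps_n^{2-p}\sum_{j\notin\cJ_n}|x_{j,0}|^p\lambda_j^{p-2},$$
and again the source/Besov condition bounds the last sum by $1$, giving the same rate $\veps_n^{2-p}=(\log n/n)^{2s/(2s+2t+1)}$.

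Combining these two estimates with Corollary \ref{result} and the fact that its high-probability event has complement of probability $c\exp(-\log n/c^2)\to 0$ yields the claimed $O_{\bP}$ bound. The only step that requires any real care is the verification of the exponent identity tying the Besov index $p$ to the degree of ill-posedness $t$; once that identity is in place, both the variance and the bias sums are controlled by the same single $L^p$ functional of the Besov body, and the rate follows immediately. I therefore expect the main ``obstacle'' to be purely bookkeeping: making sure the constants from Corollary \ref{result} (in particular the implicit $\gamma\in(0,1)$) and the $\asymp$ in $\lambda_j=\mathcal{O}(j^{-t})$ are absorbed cleanly into the $O_{\bP}$ constant.
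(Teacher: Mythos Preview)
Your proof is correct and follows essentially the same approach as the paper: you invoke Corollary~\ref{result}, bound the bias $\|x_*-x_0\|_n^2$ and the variance-like sum $\sum_{j\in\cJ_n}\lambda_j^{-2}$ by exploiting the threshold definition together with the Besov-body constraint, and rely on the exponent identity $2-p=2sp/(2t+1)$ that follows from $\frac{1}{p}=\frac{1}{2}+\frac{s}{2t+1}$. The only cosmetic difference is that you work directly in terms of $\lambda_j$ whereas the paper translates everything into powers of $j$ via $\lambda_j\asymp j^{-t}$; your version is in fact slightly cleaner and sidesteps the two-sided $\asymp$ assumption.
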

\begin{proof}
  Starting from Corollary \ref{result}, we have to bound the two terms $\|x_*-x_0\|_n^2$, which stands for a bias term, and $ \sum_{j \in \cJ_n}\frac{1}{\lambda_j^2}$ which stands for a variance term. For this we will take $\cJ_n=\{j,\:x_{j,0} \geq \frac{1}{\lambda_j}\sqrt{\frac{\log n}{n}} \}$ and make use of the assumption $\sum_{j \geq 1} \lambda_j^{-2sp/(2t+1)}x_{j,0}^p \leq 1.$ \\ First write
\begin{align*}
  \|x_*-x_0\|_n^2 & = \sum_{j \notin \cJ_n} x_j^2 \\
& = \sum_{j \notin \cJ_n} x_j^{2-p}x_j^p j^{\f {2tps}{2t+1}} j^{-{\f {2tps}{2t+1}}} \\
& \leq \left( \frac{\log n}{n}\right)^{1-\f p 2} \sum_{j=1}^n x_j^p j^{\f {2tps}{2t+1}} j^{t(2-p)-{\f {2tps}{2t+1}}} \\
& \leq \left( \frac{\log n}{n}\right)^{1-\f p 2}=\left( \frac{\log n}{n}\right)^{\frac{2s}{2s+2t+1}}. 
\end{align*}
using repeatdly  $\f 2 p =1+ \f {2s} {2t+1}$ and $t(2-p)-{\f {2tps}{2t+1}}=0$.\\
\indent On the other hand we get 
\begin{align*}
  \sum_{j \in \cJ_n} \frac{1}{\lambda_j^2} & = \sum_{j \in \cJ_n} j^{t(2-p)}j^{tp} \\
 & \leq \left( \frac{n}{\log n}\right)^{\f p 2} \sum_{j=1}^n x_j^p j^{\f {2tps}{2t+1}} j^{t(2-p)-\f {2tps}{2t+1}} \\
 & \leq \left( \frac{n}{\log n}\right)^{\f p 2},
\end{align*}
 Previous upper bounds lead to \eqref{eq:rate}, concluding the proof.
\end{proof}
We point out that  we obtain the optimal rate of convergence for ill-posed inverse problems, i.e $\left( n \right)^{-\frac{2s}{2s+2t+1}} $, up to a logarithmic factor. Hence, the  estimator \eqref{eq:2} is adaptive with respect to the parameter $s$ within the range of Besov spaces $B^s_{pp}([0,1])$ with $\f 1 p=\f 1 2+ \f s {2t+1}$ for the empirical quadratic loss. This result is the same as the one obtained in Cavalier et al. (2002), when working in the sequential model. Hence we provide a new estimator which achieves optimal rates of convergence and which can be easily computed, as shown in Section 5. 
\vskip 3mm

\noindent 4. COMMENTS ON PENALIZED LSE FOR INVERSE PROBLEMS

The estimator obtained in \eqref{eq:2} uses a specific loss function adapted to the particular ill-posed problem, namely $d_n(y,x):= \sum_{j=1}^n \left| <y-F(x),\f {\varphi_j}{ \lambda_j}>_n \right|^2.$ This loss function leads to optimal rate of converge but its main drawback is that the knowledge of the operator $F$ and its SVD $(\lambda_j;\varphi_j,\psi_j),\: j\geq 1$ are needed. In numerical analysis for inverse problems, a classical deblurring procedure involves minimizing the usual quadratic empirical loss function together with the $l^1$ penalty, see for instance Daubechies et al. (2004) or Cohen et al. (2003). More precisely an estimator is defined as
\begin{equation}
  \label{eq:3}
  \tilde{x}_n = {\rm arg}\min_{x=\sum_{j=1}^n x_j \psi_j} \left(\| y-F(x) \|_n^2 +  I_\mu (x ) \right).
\end{equation}
This estimation procedure is used and its weak consistency is well-known, even if it is known to provided sometimes an inconsistent estimate. However this estimator can be easily implemented in an iterative procedure, similar to a gradient descent algorithm with a data driven step. Such algorithm is widely used in image deblurring for instance. We refer to Daubechies et al. (2004) for more references.\\
\indent The following theorem  gives conditions to ensure consistency and provides rates of convergence for the empirical loss function.
\begin{thm} \label{pasbien}
Assume that the errors $\epsilon$ satisfy to the condition $(\Delta)$.
Assume that there exists a roughness parameter $0<\rho<2$ such that \begin{equation} \sum_{j=1}^n |x_{j,0}|^\rho \leq 1. \label{reg} 
\end{equation}
Hence the estimator defined in \eqref{eq:3} is consistent as soon as $1-\rho/2-2t >0$ and converges at the following rate of convergence
\begin{equation}
  \label{tilde}
  \| \tilde{x}_n-x_0\|_n^2 = O_{\bf P} \left(\log^{1-\f \rho 2}(n) \left(\frac{1}{n}\right)^{ 1 -\f \rho 2 -2t} \right).
\end{equation}
\end{thm}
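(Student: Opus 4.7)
The proof parallels Theorem~\ref{first}. I would start from the optimality inequality defining $\tilde{x}_n$,
\begin{equation*}
\|y - F(\tilde{x}_n)\|_n^2 + I_\mu(\tilde{x}_n) \leq \|y - F(x_*)\|_n^2 + I_\mu(x_*),
\end{equation*}
for a sparse oracle $x_*$ to be chosen later (a hard-truncation of $x_0$ tuned to the effective soft-thresholding level of the LSE). Expanding $\|y - F(x)\|_n^2$ via the SVD and setting $V_j = \langle \epsilon, \varphi_j\rangle_n$, the quadratic loss becomes $\sum_j \lambda_j^2(x_j - x_{j,0})^2 - 2\sum_j \lambda_j V_j (x_j - x_{j,0}) + \|\epsilon\|_n^2$, which after cancellation yields the basic inequality
\begin{equation*}
\sum_j \lambda_j^2(\tilde{x}_j - x_{j,0})^2 + I_\mu(\tilde{x}_n) \leq \sum_j \lambda_j^2(x_{j,*} - x_{j,0})^2 + 2\sum_j \lambda_j V_j (\tilde{x}_j - x_{j,*}) + I_\mu(x_*).
\end{equation*}

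On the event $B_n$ introduced in Corollary~\ref{result}, $|V_j| \leq c\sqrt{\log n / n}$ for every $j$, so the noise term is bounded by $2c\sqrt{\log n/n}\sum_j \lambda_j|\tilde{x}_j - x_{j,*}|$. The crucial algebraic identity is $2c\sqrt{\log n/n}\,\lambda_j = \mu_j \lambda_j^2 \leq \mu_j$ whenever $\lambda_j \leq 1$, which lets us dominate the noise by the $l^1$ penalty: $\sum_j \mu_j|\tilde{x}_j - x_{j,*}| \leq I_\mu(\tilde{x}_n) + I_\mu(x_*)$. Cancelling $I_\mu(\tilde{x}_n)$ on both sides of the basic inequality leaves an oracle-type control on the \emph{operator-weighted} $l^2$ error,
\begin{equation*}
\|F(\tilde{x}_n - x_0)\|_n^2 \leq \|F(x_* - x_0)\|_n^2 + 2 I_\mu(x_*),
\end{equation*}
rather than on $\|\tilde{x}_n - x_0\|_n^2$ directly.

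The next phase passes from the operator-weighted error back to the ambient $l^2$ norm. Using $\lambda_j \geq \lambda_n = \mathcal{O}(n^{-t})$ one has $\|\tilde{x}_n - x_0\|_n^2 \leq \lambda_n^{-2}\|F(\tilde{x}_n - x_0)\|_n^2$, and the factor $\lambda_n^{-2} = \mathcal{O}(n^{2t})$ is exactly the $n^{2t}$ appearing in the rate. The $l^\rho$ regularity assumption $\sum_j|x_{j,0}|^\rho \leq 1$ then enters through the choice of $x_*$: taking $x_{j,*}$ equal to $x_{j,0}$ on indices $j$ where $|x_{j,0}|$ exceeds the effective thresholding level $\tau_j = \mu_j/(2\lambda_j^2)$ and zero elsewhere, and using the interpolation inequality $\min(a^2,b^2) \leq a^\rho b^{2-\rho}$ coordinate-wise with $(a,b)=(|x_{j,0}|,\tau_j)$, one balances $\|F(x_* - x_0)\|_n^2$ against $I_\mu(x_*)$ to obtain a bound of order $(\log n/n)^{1-\rho/2}$ on the operator-weighted error. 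Combining with the $n^{2t}$ factor gives the claimed rate $\log^{1-\rho/2}(n)\cdot n^{-(1-\rho/2-2t)}$, and the requirement $1 - \rho/2 - 2t > 0$ is exactly what ensures the bound tends to zero. The $O_{\mathbf P}$ conclusion then follows from $\mathbf{P}(B_n^c) \leq c\exp(-\log n/c^2)$ as in Corollary~\ref{result}.

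The hard step will be the last one: the passage from the operator-weighted error to $\|\cdot\|_n^2$. Whereas the estimator of Section~2 uses a loss tailored to the inverse problem so that $\|\cdot\|_n$ itself is controlled, the plain least-squares contrast only knows about the filtered error $\|F(\cdot)\|_n$, and unpacking this into the ambient $l^2$ norm costs the worst-case ill-posedness factor $\lambda_n^{-2} = \mathcal{O}(n^{2t})$. This is what makes the estimator of Section~2 preferable, and what forces possible inconsistency as soon as $t \geq (1-\rho/2)/2$.
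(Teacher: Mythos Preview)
Your overall architecture (basic inequality, work on $B_n$, pass from $\|F(\cdot)\|_n$ to $\|\cdot\|_n$ via $\lambda_n^{-2}$) matches the paper, but the middle step---the ``clean'' oracle inequality
\[
\|F(\tilde x_n-x_0)\|_n^2 \le \|F(x_*-x_0)\|_n^2 + 2I_\mu(x_*)
\]
obtained by absorbing \emph{all} of the noise into the penalty through $2c\sqrt{\log n/n}\,\lambda_j\le \mu_j$---is too crude to reach the claimed rate. The problem is the term $2I_\mu(x_*)$: with weights $\mu_j=2c\lambda_j^{-1}\sqrt{\log n/n}$, keeping even a single coefficient $x_{j,0}$ of size $O(1)$ costs at least $\mu_j\asymp \sqrt{\log n/n}$, while your target on the operator-weighted error is $(\log n/n)^{1-\rho/2}$, which is strictly smaller whenever $\rho<1$. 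Concretely, take $x_0$ equal to a single spike $x_{1,0}=1$; then $\inf_{x_*}\bigl[\|F(x_*-x_0)\|_n^2+2I_\mu(x_*)\bigr]=\min(\lambda_1^2,\,2\mu_1)\asymp\sqrt{\log n/n}$, so your right-hand side cannot be $O\bigl((\log n/n)^{1-\rho/2}\bigr)$ for $\rho<1$. The interpolation $\min(a^2,b^2)\le a^\rho b^{2-\rho}$ with $(a,b)=(|x_{j,0}|,\tau_j)$ controls the bias $\lambda_j^2x_{j,0}^2$ on $\{|x_{j,0}|\le\tau_j\}$, but on the active set the contribution is $2\mu_j|x_{j,0}|=4\lambda_j^2\tau_j|x_{j,0}|$, which is \emph{not} of the form $\lambda_j^2\min(x_{j,0}^2,\tau_j^2)$ and is not dominated by $\lambda_j^2|x_{j,0}|^\rho\tau_j^{2-\rho}$ when $\rho<1$ and $|x_{j,0}|>\tau_j$. (Even the bias bound $\sum_j\lambda_j^2|x_{j,0}|^\rho\tau_j^{2-\rho}$ diverges with your choice $\tau_j=\mu_j/(2\lambda_j^2)\asymp \lambda_j^{-3}\sqrt{\log n/n}$, since $\lambda_j^2\tau_j^{2-\rho}\asymp\lambda_j^{3\rho-4}$ blows up for $\rho<4/3$.)

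The paper avoids this by \emph{not} trading the active-set noise for the penalty. After cancelling the inactive part exactly as you do, it keeps the cross term
\[
4c\sqrt{\tfrac{\log n}{n}}\sum_{j\in\cJ_n}\lambda_j\,|\tilde x_j-x_{j,0}|
\]
with the \emph{small} weight $\lambda_j$ (not $\mu_j$), applies Cauchy--Schwarz to pull out $\|\tilde x_n-x_0\|_n$, bounds $\bigl(\sum_{j\in\cJ_n}\lambda_j^2\bigr)^{1/2}\le |\cJ_n|^{1/2}\le (n/\log n)^{\rho/4}$ from $\sum_j|x_{j,0}|^\rho\le 1$, and only then uses $2xy\le\gamma x^2+\gamma^{-1}y^2$ to absorb the cross term into the left-hand side $n^{-2t}\|\tilde x_n-x_0\|_n^2$. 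That Cauchy--Schwarz\,/\,cardinality step on the active set is the missing idea in your plan; without it you cannot close the argument over the full range $0<\rho<2$.
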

\begin{proof}
  Using the definition of the estimator, first point out that on the set $B_n= \{ \max_{j=1,\dots,n} |V_j| \leq c \sqrt{\frac{\log n}{n}} \},$ we have following the guidelines of the proof of Theorem \ref{first}
$$ \|y-F(\tilde{x}_n)\|_n^2 +  I_\mu (\tilde{x}_n ) \leq \|y-F({x}_\star)\|_n^2 + I_\mu ({x}_\star),$$ leading for $x_\star$ the oracle and a choice $$\cJ_n=\{j,\: |x_{j,0}| \geq c \frac{1}{\lambda_j} \sqrt{\frac{\log n}{n}}\},$$
  \begin{align*}
    n^{-2t} \|\tilde{x}_n-x_0\|_n^2 \leq & 4 \sqrt{\frac{\log n}{n}} \sum_{j \in \cJ_n} \lambda_j |\hat{x}_j-x_{j,0}| + 4\sum_{j \notin \cJ_n} \lambda^2_j |x_{j,0}|^2 \\
\leq & 4 \sqrt{\frac{\log n}{n}} \left(\sum_{j \in \cJ_n} \lambda_j^2\right)^{\frac{1}{2}} \|\tilde{x}_n-x_0\|_n+ 4\sum_{j \notin \cJ_n} \lambda^2_j |x_{j,0}|^\rho |x_{j,0}|^{2-\rho} \\
\leq & 4 \sqrt{\frac{\log n}{n}} \sqrt{|\cJ_n|} \|\tilde{x}_n-x_0\|_n+ 4 \left( \frac{\log n}{n}\right)^{1-\frac{\rho}{2}} \sum_{j \notin \cJ_n} |x_{j,0}|^\rho .
\end{align*}
where $|\cJ_n|$ stands for the cardinal of the set $\cJ_n$. Now using that $$1\geq \sum_j |x_{j,0}|^\rho\geq  \sum_{j \in \cJ_n} |x_{j,0}|^\rho\geq (\f {\log n} n)^{\rho/2} \sum_{j \in \cJ_n} j^{t\rho} \geq (\f {\log n} n)^{\rho/2}|\cJ_n|,  $$
we get the following bound $\sqrt{|\cJ_n|}\leq (\frac{n}{\log n})^{\rho/4}.$
Finally, using that $2xy \leq \gamma x^2+\f 1 \gamma y^2$ for all positive $\gamma$, we obtain that on the set $B_n$ we get
$$ \|\tilde{x}_n-x_0\|_n^2 = O \left[ \log^{1-\f \rho 2}(n) \left(\frac{1}{n}\right)^{1 -\f \rho 2 -2t}
 \right].$$
Assumption $(\Delta)$ ensures that $\bP (B_n^c) \rightarrow 0$, which concludes the proof.
\end{proof}
We point out that the estimator $\tilde{x}_n$ is convergent when the index of ill-posedness is such that $t < \f 1 2 - \f \rho 4$. This implies that for $t> 1/2$, i.e for severe ill-posed problems, the estimation procedure does not lead to consistent estimates. Even in the case where the estimator converges, its rate of convergence is less than the rate of the optimal estimator $\hat{x}_n$. This difference comes from the fact that the standard quadratic loss function is not well fitted to handle inverse problems since the extra term in $\la_j^2=j^{-2t}$ entails a loss of order $n^{-2t}$. That is the reason why we propose a loss function in the definition \eqref{eq:2} which one the one hand gets rid of this term and other hand induces a bias error which can be balanced by the $l^1$ penalty, which enables to achieve optimality. \\ 
\indent We can argue that the regularity assumption \eqref{reg} is more general than Assumption \eqref{Besov2}. However,  increasing the regularity of the function to be estimated does not help increasing the corresponding  rate of convergence of the estimator $\tilde{x}_n$. Indeed the limiting factor in $n^{-2s}$ comes from the rate of decay of the eigenvalues and not the decay of the coefficients $\sum_{j \notin \cJ_n} x_{0,j}^2$.  Hence estimator $\tilde{x}_n$ is outperformed by estimator $\hat{x}_n$.\\
\indent  However Theorem \ref{pasbien} gives the range of application of the standard penalized $l^1$ estimator and provides a better understanding of its behaviour.

\vskip 3mm
\noindent 5. NUMERICAL RESULTS

In this section, we apply our estimation procedure to simulated data obtained using a sequence model. The function we wish to reconstruct is $t \to \sin(1/(.1+t)$, observed in an ill-posed settings with $t$ standing for the index of ill-posedness. We take $n=200$ observations and the observation noise is a Gaussian white noise with ${\rm snr}=2$.\\
 We plot in straight lines the true signal to be recovered, in dotted lines the estimator while the observations are represented by crosses. We will consider two cases depending on the ill-posedness of the operator, with an easy inverse problem $t<\f 1 2$ in Figure \ref{fig1} and a severe ill-posed problem $t>\f 1 2$ in Figure \ref{fig2}. The two figures are significant realizations of 50 replications.
\begin{center}
\begin{figure}
\scalebox{.8}{\includegraphics{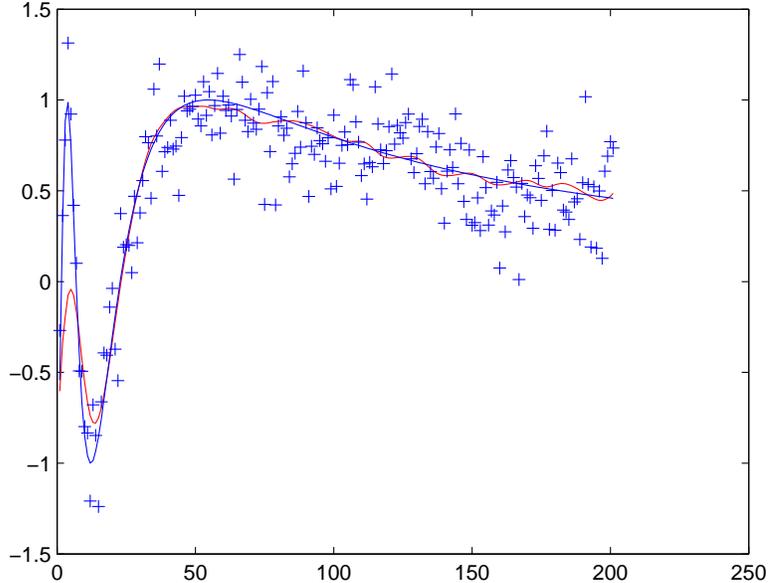}}
\caption{Index of Ill-posedness $t=.2$}
\label{fig1}
\end{figure}
\end{center}
\begin{figure}
\scalebox{.5}{\includegraphics{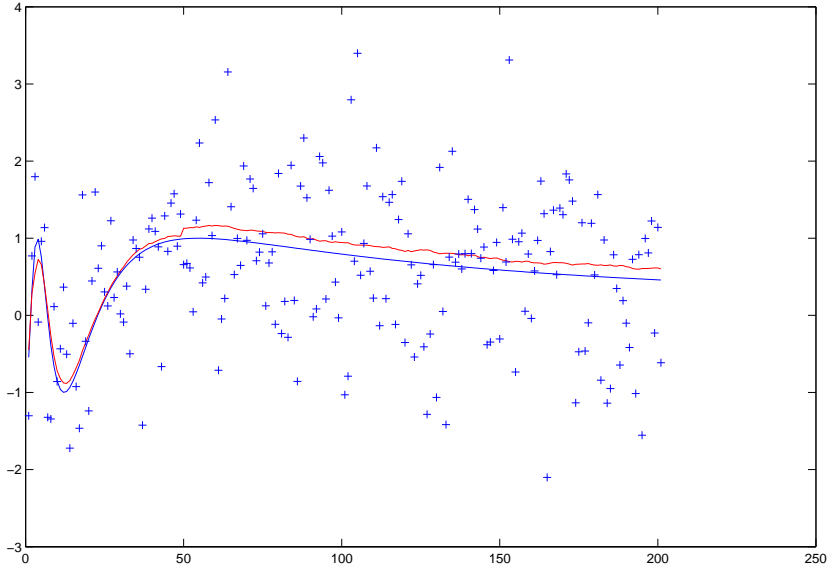}}
\caption{Index of Ill-posedness $t=1.5$}
\label{fig2}
\end{figure}
 We can see that in both cases the estimator $\hat{x}_n$ is consistent and provides a quite good approximation of the unknown function.  Hence the estimation procedure given in \eqref{eq:2} provides a good estimator of ill-posed inverse problems when nothing is known about the regularity of the function. Its main drawback is that a very good knowledge of the operator is needed, but this is the case in most of the denoising procedures for inverse problems. 
\vskip 3mm
\noindent {\sc acknowledgements}
We thank the referees for their carefull reading and valuable comments which contributed to improve this work.
\vskip 3mm
\noindent BIBLIOGRAPHY
\vskip 3mm

\noindent {\scshape O.~V. Besov, V.~P. Iliin, and S.~M. Nikolskii}
  -- \emph{Integral representations of functions and imbedding theorems. {V}ol.
  {I}}, V. H. Winston \&\ Sons, Washington, D.C., 1978, Translated from the
  Russian, Scripta Series in Mathematics, Edited by Mitchell H. Taibleson.

\vskip 3mm

\noindent  {\scshape N.~Bissantz, T.~Hohage and  A.~Munk} -- {\it
  Nonlinear tikhonov regularization for statistical inverse problems},
  \emph{preprint} (2004).

\vskip 3mm

\noindent {\scshape L.~Cavalier, G.~K. Golubev, D.~Picard and A.~B.
  Tsybakov} -- {\it Oracle inequalities for inverse problems }, \emph{Ann.
  Statist.} \textbf{30} (2002), no.~3, p.~843--874, Dedicated to the memory of
  Lucien Le Cam.

\vskip 3mm

\noindent  {\scshape A.~Cohen, M.~Hoffmann and M.~Reiss} -- {\it
  Adaptive wavelet galerkin methods for linear inverse problems },
  \emph{SIAM} \textbf{1} (2003), no.~3, p.~323--354.
\vskip 3mm

\noindent {\scshape S.~Darolles, J.-P. Florens and E.~Renault} --
  {\it Nonparametric instrumental regression }, \emph{preprint} (2003).
\vskip 3mm

\noindent {\scshape I.~Daubechies, M.~Defrise and C.~De~Mol} --
  {\it An iterative thresholding algorithm for linear inverse problems with a
  sparsity constraint }, \emph{Comm. Pure Appl. Math} \textbf{57} (2004),
  p.~1413--1541.
\vskip 3mm

\noindent {\scshape A.~K. Dey, F.~H. Ruymgaart and B.~A. Mair} --
  {\it Cross-validation for parameter selection in inverse estimation
  problems }, \emph{Scand. J. Statist.} \textbf{23} (1996), no.~4,
  p.~609--620.
\vskip 3mm

\noindent {\scshape H.~W. Engl} -- {\it Regularization methods for solving inverse
  problems }, ICIAM 99 (Edinburgh), Oxford Univ. Press, Oxford, 2000,
  p.~47--62.
\vskip 3mm

\noindent {\scshape H.~W. Engl, M.~Hanke and A.~Neubauer} --
  \emph{Regularization of inverse problems}, Mathematics and its Applications,
  vol. 375, Kluwer Academic Publishers Group, Dordrecht, 1996.
\vskip 3mm

\noindent {\scshape A.-K. Fermin, J.-M. loubes and C.~Lude\~na} --
  {\it Model selection for linear inverse problems }, \emph{Proceedings of
  Oberwolfach} \textbf{to appear} (2005).
\vskip 3mm

\noindent {\scshape S.~Van~de Geer} -- \emph{Applications of empirical process theory},
  Cambridge Series in Statistical and Probabilistic Mathematics, Cambridge
  University Press, Cambridge, 2000.
\vskip 3mm

\noindent {\scshape W.~H{\"a}rdle, G.~Kerkyacharian, D.~Picard and
  A.~Tsybakov} -- \emph{Wavelets, approximation, and statistical applications},
  Springer-Verlag, New York, 1998.
\vskip 3mm

\noindent {\scshape B.~Kaltenbacher} -- {\it {Regularization by projection with a
  posteriori discretization level choice for linear and nonlinear ill-posed
  problems.} }, \emph{Inverse Probl.} \textbf{16} (2000), no.~5,
  p.~1523--1539 (English).
\vskip 3mm

\noindent {\scshape J.-M. Loubes and S.~van~de Geer} -- {\it
  Adaptive estimation using thresholding type penalties }, \emph{Statistica
  Neerlandica} \textbf{56} (2002), p.~1--26.
\vskip 3mm

\noindent {\scshape J.-M. loubes and C.~Lude\~na} -- {\it  Penalized
  estimators for nonlinear inverse problems}, \emph{to appear in ESAIM-PS} (2007).
\vskip 3mm

\noindent {\scshape J.-M. Loubes} -- {\it {$\ell\sp 1$} sparsity and applications in
  estimation}, \emph{C. R. Math. Acad. Sci. Paris} \textbf{344} (2007),
  no.~6, p.~399--402.

\noindent {\scshape B.~A. Mair and F.~H. Ruymgaart} -- {\it
  Statistical inverse estimation in {H}ilbert scales }, \emph{SIAM J. Appl.
  Math.} \textbf{56} (1996), no.~5, p.~1424--1444.
\vskip 3mm

\noindent {\scshape U.~Tautenhahn and Q.-n. Jin} -- {\it Tikhonov
  regularization and a posteriori rules for solving nonlinear ill posed
  problems }, \emph{Inverse Problems} \textbf{19} (2003), no.~1, p.~1--21.
\vskip 3mm

\noindent {\scshape A.~N. Tikhonov, A.~S. Leonov and A.~G. Yagola}
  -- \emph{Nonlinear ill-posed problems. {V}ol. 1, 2}, Applied Mathematics and
  Mathematical Computation, vol.~14, Chapman \& Hall, London, 1998, Translated
  from the Russian.

\end{document}